\newtheorem{thm}{Theorem}
\newtheorem{defi}{Definition}
\newtheorem{prop}{Proposition}
\newtheorem{rem}{Remark}
\newcommand{\Sp}{\mathbb{S}}
\newcommand{\Hy}{\mathbb{H}}
\newcommand{\R}{\mathbb{R}}
\newcommand{\pl}{\langle}
\newcommand{\pr}{\rangle}
\newcommand{\Qu}{\mathrm{H}}
\newcommand{\Ph}{\phi_{\alpha,\beta}(t)}
\title{Helicoidal flat surfaces in the 3-sphere} 
\author[F. Manfio]{F. Manfio$^1$} \thanks{$^1$Research partially supported by FAPESP/Brazil, grant 2014/01989-9}
\address{ICMC, Universidade de S\~ao Paulo, S\~ao Carlos, Brasil} 
\email{manfio@icmc.usp.br}
\author[J. P. dos Santos]{J. P. dos Santos$^2$} \thanks{$^2$Research partially supported by FEMAT/UnB and FAPDF}      
\address{MAT, Universidade de Bras\'ilia, Bras\'ilia, Brazil}
\email{joaopsantos@unb.br}
\subjclass[2010]{Primary 53A35, 53B20, 53C42} 
\keywords{Helicoidal surfaces, flat surfaces, 3-sphere}
\begin{document}

\begin{abstract}
In this paper, helicoidal flat surfaces in the $3$-di\-men\-sio\-nal
sphere $\Sp^3$ are considered. A complete classification of such
surfaces is given in 
terms of their first and second fundamental forms and by linear solutions
of the corresponding angle function. The classification is obtained by 
using the Bianchi-Spivak construction for flat surfaces and a 
representation for constant angle surfaces in $\Sp^3$.
\end{abstract}

\maketitle

\section{Introduction}

Helicoidal surfaces in $3$-dimensional space forms arise as a natural
generalization of rotational surfaces in such spaces. These surfaces
are invariant by a subgroup of the group of isometries of the ambient
space, called helicoidal group, whose elements can be seen
as a composition of a translation with a rotation for a given axis.

In the Euclidean space $\R^3$, do Carmo and Dajczer \cite{docarmo} describe the 
space of all helicoidal surfaces that have constant mean
curvature or constant Gaussian curvature. This space behaves as
a circular cylinder, where a given generator corresponds to the 
rotational surfaces and each parallel corresponds to a periodic family of helicoidal surfaces. Helicoidal surfaces with prescribed mean or Gaussian curvature are obtained by Baikoussis and Koufogiorgos \cite{BK}. More precisely, they obtain a closed form of such a surface by integrating the second-order ordinary differential equation satisfied by the generating curve of the surface. Helicoidal 
surfaces in $\R^3$ are also considered by Perdomo \cite{P1} in the context
of minimal surfaces, and by Palmer and Perdomo \cite{PP2} where the
mean curvature is related with the distance to the $z$-axis. In the 
context of constant mean curvature, helicoidal surfaces are considered by
Solomon and Edelen in \cite{edelen1}.

In the $3$-dimensional hyperbolic space $\Hy^3$, Mart\'inez, the second 
author and Tenenblat \cite{MST} give a complete classification of the 
helicoidal flat surfaces in terms of meromorphic data, which extends the 
results obtained by Kokubu, Umehara and Yamada \cite{KUY} for rotational
flat surfaces. Moreover, the classification is also given by means of linear 
harmonic functions, characterizing the flat fronts in $\mathbb{H}^3$ that
correspond to linear harmonic functions. Namely, it is well known that for 
flat surfaces in $\mathbb{H}^3$, on a neighbourhood of a non-umbilical 
point, there is a curvature line parametrization such that the first and 
second fundamental forms are given by 
\begin{equation}
\begin{array}{rcl}
I &=& \cosh^2 \phi(u,v) (du)^2 + \sinh^2 \phi(u,v) (dv)^2,  \\
II &=& \sinh \phi(u,v) \cosh \phi(u,v) \left((du)^2 + (dv)^2 \right), 
\end{array} \label{firstff}
\end{equation}
where $\phi$ is a harmonic function, i.e., $\phi_{uu}+\phi_{vv}=0$. In 
this context, the main result states that a surface in $\Hy^3$, parametrized
by curvature lines, with fundamental forms as in \eqref{firstff} and 
$\phi(u,v)$ linear, i.e, $\phi(u,v) = a u + b v + c$, is flat if and only if, the
surface is a helicoidal surface or a {\em peach front}, where the second
one is associated to the case $(a,b,c) = (0,\pm1,0)$.
Helicoidal minimal surfaces were studied by Ripoll \cite{ripoll} and
helicoidal constant mean curvature surfaces in $\mathbb{H}^3$ are 
considered by Edelen \cite{edelen2}, as well as the cases where 
such invariant surfaces belong to $\mathbb{R}^3$ and $\mathbb{S}^3$.

Similarly to the hyperbolic space, for a given flat surface in the
$3$-dimensional sphere $\Sp^3$, there exists a parametrization by
asymptotic lines, where the first and the second fundamental forms
are given by
\begin{equation} 
\begin{array}{rcl}
I &=& du^2 + 2 \cos \omega du dv + dv^2, \\
II &=& 2  \sin \omega du dv \label{primeiraffs}
\end{array} 
\end{equation}
for a smooth function $\omega$, called the {\em angle function}, that 
satisfy the homogeneous wave equation $\omega_{uv} = 0$. 
Therefore, one can ask which surfaces are related to linear solutions
of such equation. 

The aim of this paper is to give a complete classification of helicoidal flat
surfaces in $\Sp^3$, established in Theorems 1 and 2, by means of asymptotic lines coordinates, with first
and second fundamental forms given by \eqref{primeiraffs}, where the 
angle function is linear. In order to do this, one uses the Bianchi-Spivak construction for flat surfaces in $\Sp^3$. This construction and the Kitagawa representation \cite{kitagawa1}, are important tools used
in the recent developments of flat surface theory. Examples of applications
of such representations can be seen in \cite{galvezmira1} and \cite{aledogalvezmira}. Our classification also makes use of a 
representation for constant angle surfaces in $\Sp^3$, who comes 
from a characterization of constant angle surfaces in the Berger 
spheres obtained by Montaldo and Onnis \cite{MO}.

This paper is organized as follows. In Section 2 we give a brief description
of helicoidal surfaces in $\Sp^3$, as well as a ordinary differential equation
that characterizes those one that has zero intrinsic curvature. 

In Section 3, the Bianchi-Spivak construction is introduced. It will be used to prove Theorem 1, which states that a flat surface in $\Sp^3$, with asymptotic
parameters and linear angle function, is invariant under helicoidal motions. 

In Section 4, Theorem 2 establishes  the converse of Theorem 1, that is, a helicoidal flat surface admits a local parametrization,  given
by asymptotic parameters where the angle function is linear. Such local parametrization is obtained by using a characterization 
of constant angle surfaces in Berger spheres, which is a consequence of the fact that a helicoidal flat surface is a 
constant angle surface in $\Sp^3$, i.e., it has a unit normal that makes a constant angle with the Hopf vector field.

In section 5 we present an application for conformally flat hypersurfaces
in $\R^4$. The classification result obtained is used to give a geometric characterization for special conformally flat surfaces in $4-$dimensional
space forms. It is known that conformally flat hypersurfaces in 
$4$-dimensional space forms are associated with solutions of a system
of equations, known as Lam\'e's system (see \cite{Jeromin1} and 
\cite{santos} for details). In \cite{santos}, Tenenblat and the second author
obtained invariant solutions under symmetry groups of Lam\'e's system.
A class of those solutions is related to flat surfaces in $\Sp^3$, 
parametrized by asymptotic lines with linear angle function. Thus
a geometric description of the correspondent conformally flat 
hypersurfaces is given in terms of helicoidal flat surfaces in $\Sp^3$.

\section{Helicoidal flat surfaces}

Given any $\beta\in\R$, let $\{\varphi_\beta(t)\}$ be the one-parameter
subgroup of isometries of $\Sp^3$ given by
\[
\varphi_\beta(t) = \left( 
\begin{array}{cccc}
1  & 0 &  0 & 0 \\
0 & 1 & 0 & 0 \\
0 & 0 & \cos \beta t & -\sin \beta t \\
0 & 0 & \sin \beta t & \cos \beta t \\
\end{array} 
\right).
\]
When $\beta\neq0$, this group fixes the set $l=\{(z,0)\in\Sp^3\}$, which
is a great circle and it is called the {\em axis of rotation}. In this case, the
orbits are circles centered on $l$, i.e., $\{\varphi_\beta(t)\}$ consists of 
rotations around $l$. Given another number $\alpha\in\R$, consider now
the translations
$\{\psi_\alpha(t)\}$ along $l$,
\[
\psi_\alpha(t)=\left( 
\begin{array}{cccc}
\cos \alpha t & -\sin \alpha t &  0 & 0 \\
\sin \alpha t & \cos \alpha t & 0 & 0 \\
0 & 0 & 1 & 0 \\
0 & 0 & 0 & 1 \\
\end{array} 
\right).
\]

\begin{defi}\label{def:helicoidal}
{\em A {\em helicoidal} surface in $\Sp^3$ is a surface invariant under the 
action of the helicoidal 1-parameter group of isometries
\begin{equation}\label{eq:movhel}
\Ph=\psi_\alpha(t)\circ\varphi_\beta(t)=
\left( 
\begin{array}{cccc}
\cos \alpha t & -\sin \alpha t &  0 & 0 \\
\sin \alpha t & \cos \alpha t & 0 & 0 \\
0 & 0 & \cos \beta t & -\sin \beta t \\
0 & 0 & \sin \beta t & \cos \beta t \\
\end{array} 
\right),
\end{equation}}
{\em given by a composition of a translation $\psi_\alpha(t)$ and a rotation $\varphi_\beta(t)$ in $\Sp^3$. }
\end{defi}

\begin{rem}
{\em When $\alpha=\beta$, these isometries are usually called 
{\em Clifford translations}. In this case, the orbits are all great circles, 
and they are equidistant from each other. In fact, the orbits of the action
of $G$ coincide with the fibers of the Hopf fibration $h:\Sp^3\to\Sp^2$.
We note that, when $\alpha=-\beta$, these isometries are also, up to
a rotation in $\Sp^3$, Clifford translations. For this reason we will consider
in this paper only the cases $\alpha\neq\pm\beta$.}
\end{rem}

With these basic properties in mind, a helicoidal surface can be locally
parametrized by
\begin{equation}\label{eq:param-helicoidal}
X(t,s) = \Ph\cdot\gamma(s),
\end{equation}
where $\gamma:I\subset\R\to\Sp^2_+$ is a curve parametrized by the arc
length, called the {\em profile curve} of the parametrization $X$. Here, 
$\Sp^2_+$ is the half totally geodesic sphere of $\Sp^3$ given by
\[
\Sp^2_+ = \left\{(x_1, x_2, x_3, 0)\in\Sp^3: x_3>0\right\}.
\]
Then we have
\[
\begin{array}{rcl}
X_t &=& \Ph\cdot(-\alpha x_2,\alpha x_1, 0,\beta x_3), \\
X_s &=& \Ph\cdot\gamma'(s).
\end{array}
\]
Moreover, a unit normal vector field associated to the parametrization
$X$ is given by $N=\tilde N/ \|\tilde N\|$, where $\tilde N$ is explicitly given
by
\begin{equation}\label{normal-field}
\tilde{N} = \Ph\cdot \big(\beta x_3(x_2'x_3-x_2 x_3',\beta x_3(x_1x_3'-x_1'x_3),
\beta x_3 (x_1'x_2-x_1 x_2'),-\alpha x_3' \big).
\end{equation}
Let us now consider a parametrization by the arc length of  $\gamma$ 
given by
\begin{equation} \label{eq:param-gamma}
\gamma(s) = \big(\cos\varphi(s)\cos\theta(s),\cos\varphi(s)\sin\theta(s),
\sin\varphi(s),0\big). 
\end{equation}

We will finish this section discussing the flatness of helicoidal surfaces
in $\Sp^3$. Recall that a simple way to obtain flat surfaces in $\Sp^3$
is by means of the Hopf fibration $h:\Sp^3\to\Sp^2$. More precisely,
if $c$ is a regular curve in $\Sp^2$, then $h^{-1}(c)$ is a flat surface
in $\Sp^3$ (cf. \cite{spivak}). Such surfaces are called {\em Hopf cylinders}. The next result provides a necessary
and sufficient condition for a helicoidal surface, parametrized as in 
\eqref{eq:param-helicoidal}, to be flat.

\begin{prop}\label{prop:HSF}
A helicoidal surface locally parametrized as in \eqref{eq:param-helicoidal},
where $\gamma$ is given by \eqref{eq:param-gamma},
is a flat surface if and only if the following equation
\begin{equation}
\beta^2\varphi''\sin^3\varphi\cos\varphi - \beta^2(\varphi')^2 \sin^4\varphi 
+\alpha^2(\varphi')^4 \cos^4 \varphi = 0 \label{ode-helicoidal}
\end{equation} \label{prop-ode-helicoidal}
is satisfied. 
\end{prop}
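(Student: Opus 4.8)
The plan is to compute the Gaussian curvature of the parametrization $X(t,s) = \Ph\cdot\gamma(s)$ directly from the coefficients of the first fundamental form, using the fact that $X_s$ is a unit vector (since $\gamma$ is arc-length parametrized) and that the helicoidal motion $\Ph$ acts by isometries of $\Sp^3$. First I would record the metric coefficients: $E = \langle X_t, X_t\rangle$, $F = \langle X_t, X_s\rangle$, $G = \langle X_s, X_s\rangle = 1$. From the explicit formula $X_t = \Ph\cdot(-\alpha x_2, \alpha x_1, 0, \beta x_3)$ and the expression \eqref{eq:param-gamma} with $x_3 = \sin\varphi(s)$, one gets $E = \alpha^2\cos^2\varphi + \beta^2\sin^2\varphi$, which depends on $s$ alone; similarly $F = \langle X_t, X_s\rangle$ is a function of $s$ only, since $\langle X_t, X_s\rangle = \langle \Ph\cdot v(s), \Ph\cdot\gamma'(s)\rangle$ with $v(s) = (-\alpha x_2,\alpha x_1,0,\beta x_3)$ and the inner product is $\Ph$-invariant. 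A short computation using $\theta'(s)$ determined by the arc-length condition gives $F$ explicitly in terms of $\varphi$ and $\varphi'$.

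Next, because $E$, $F$, $G$ depend on $s$ only (the surface being homogeneous along the $t$-direction), the curvature formula simplifies considerably: the Gauss curvature of a surface in $\Sp^3$ satisfies $K = 1 + K_{\mathrm{ext}}$ where $K_{\mathrm{ext}} = \det(II)/\det(I)$, but it is cleaner here to use the intrinsic Brioschi-type formula for $K_{\mathrm{intrinsic}}$ in terms of $E, F, G$ and their $s$-derivatives, which reduces to an expression involving only $E, F, E', F', E''$ since all $t$-derivatives vanish. Setting the resulting expression for the \emph{intrinsic} curvature equal to $1$ (flatness in $\Sp^3$ means $K_{\mathrm{intrinsic}} = 0$, but since the parametrization is as a surface of $\R^4$ I would track whether the convention makes it $0$ or account for the ambient curvature term) yields, after clearing denominators, a second-order ODE in $\varphi$. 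The remaining task is to simplify this ODE to the stated form \eqref{ode-helicoidal}; here I would substitute the arc-length relation $\theta' = \text{(expression in }\varphi,\varphi')$ coming from $\|\gamma'\|^2 = \cos^2\varphi\,(\theta')^2 + (\varphi')^2 = 1$, so that $\cos^2\varphi\,(\theta')^2 = 1 - (\varphi')^2$, and use this repeatedly to eliminate $\theta$ and its derivatives.

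Alternatively, and perhaps more transparently, I would compute $K_{\mathrm{ext}}$ directly using the unnormalized normal $\tilde N$ from \eqref{normal-field}: compute $\langle X_{tt}, \tilde N\rangle$, $\langle X_{ts}, \tilde N\rangle$, $\langle X_{ss}, \tilde N\rangle$ (again all reduce to functions of $s$ by $\Ph$-invariance), divide by $\|\tilde N\|$, and form $\det(II)/\det(I)$. Flatness $K = 0$ then reads $\det(II) = -\det(I) = F^2 - E$ (using $G=1$, and the $+1$ from the sphere). Equating the two sides and simplifying with the arc-length constraint should produce \eqref{ode-helicoidal} directly. I expect the main obstacle to be purely computational: the expressions for $\tilde N$, $X_{tt}$, $X_{ss}$, and $X_{ts}$ are lengthy, and coaxing the final polynomial identity into precisely the compact form $\beta^2\varphi''\sin^3\varphi\cos\varphi - \beta^2(\varphi')^2\sin^4\varphi + \alpha^2(\varphi')^4\cos^4\varphi = 0$ will require careful bookkeeping and systematic use of the identities $\cos^2\varphi(\theta')^2 = 1-(\varphi')^2$ and its derivative. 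No conceptual difficulty is anticipated beyond organizing this elimination cleanly.
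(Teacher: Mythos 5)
Your primary route is the same as the paper's: compute $E=\alpha^2\cos^2\varphi+\beta^2\sin^2\varphi$, $F=\alpha\theta'\cos^2\varphi$, $G=1$, apply the Brioschi-type formula for the Gaussian curvature (which, since all $t$-derivatives vanish, collapses to $E_s(EG-F^2)_s-2(EG-F^2)E_{ss}=0$), and eliminate $\theta'$ via $\cos^2\varphi\,(\theta')^2=1-(\varphi')^2$; this is exactly what the paper does. One point you left hanging must be settled correctly or the computation derails: the Brioschi formula computes the \emph{intrinsic} curvature of the induced metric directly from $E,F,G$, so flatness is the condition that this expression equals $0$ with no ambient correction --- the $+1$ from the sphere enters only if you instead go through the Gauss equation $K=1+\det II/\det I$, as in your alternative extrinsic route (where $\det II=-\det I$ is indeed the right condition); setting the Brioschi expression equal to $1$ would give the wrong ODE. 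A second small point: the simplification produces an overall factor $(\beta^2-\alpha^2)$ multiplying the left side of \eqref{ode-helicoidal}, so the stated equivalence relies on the standing assumption $\alpha\neq\pm\beta$ from the paper's Remark; when $\alpha=\pm\beta$ the flatness equation is satisfied identically and the converse direction would fail. With these two points fixed, your plan goes through as in the paper.
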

\begin{proof}
Since $\Ph\in O(4)$ and $\gamma$ is parametrized by the arc length, the 
coefficients of the first fundamental form are 
given by
\[
\begin{array}{rcl}
E &=& \alpha^2\cos^2\varphi + \beta^2\sin^2\varphi, \\
F &=& \alpha\theta'\cos^2\varphi,\\
G &=& (\varphi')^2 + (\theta')^2 \cos^2 \varphi =1.
\end{array}
\]
Moreover, the Gauss curvature $K$ is given by
\[
\begin{array}{rcl}
4(EG - F^2)^2 K &=& E \left[ E_s G_s - 2 F_t G_s + (G_t)^2 \right] +
G \left[E_t G_t - 2 E_t F_s + (E_s)^2 \right]  \\
&&+ F (E_t G_s - E_s G_t - 2 E_s F_s + 4 F_t F_s - 2 F_t G_t )  \\
&& - 2 (EG-F^2)(E_{ss} - 2 F_{st} + G_{tt} ).
\end{array}
\]
Thus, it follows from the expression of $K$ and from the coefficients of
the first fundamental form that the surface is flat if, and only if, 
\begin{equation}\label{eq:gauss}
E_s (EG - F^2)_s - 2 (EG-F^2)E_{ss}=0. 
\end{equation}
When $\alpha=\pm\beta$, the equation \eqref{eq:gauss} is trivially
satisfied, regardless of the chosen curve $\gamma$. For the case
$\alpha\neq\pm\beta$, since 
\[
EG-F^2 =\beta^2\sin^2\varphi+\alpha^2(\varphi')^2\cos^2\varphi,
\] 
a straightforward computation shows that the equation \eqref{eq:gauss}
is equivalent to 
\[
(\beta^2-\alpha^2)\big(\beta^2\varphi''\sin^3\varphi\cos\varphi
-\beta^2(\varphi')^2\sin^4\varphi+\alpha^2(\varphi')^4\cos^4\varphi\big) = 0,
\]
and this concludes the proof.
\end{proof}

\section{The Bianchi-Spivak construction}

A nice way to understand the fundamental equations of a flat surface $M$ in
$\Sp^3$ is by parameters whose coordinate curves are asymptotic curves
on the surface. As $M$ is flat, its intrinsic curvature vanishes identically. 
Thus, by the Gauss equation, the extrinsic curvature of $M$ is constant 
and equal to $-1$. In this case, as the extrinsic curvature is negative, it is
well known that there exist Tschebycheff coordinates around every point.
This means that we can choose local coordinates $(u,v)$ such that the
coordinates curves are asymptotic curves of $M$ and these curves are
parametrized by the arc length. In this case, the first and second fundamental
forms are given by 
\begin{eqnarray}\label{eq:forms}
\begin{aligned}
I  &=  du^2 + 2 \cos \omega dudv + dv^2, \\
II&  =   2 \sin \omega du dv,
\end{aligned}
\end{eqnarray}
for a certain smooth function $\omega$, usually called the {\em angle function}.
This function $\omega$ has two basic properties. The first one is that as $I$
is regular, we must have $0<\omega<\pi$. Secondly, it follows from the Gauss
equation that $\omega_{uv}=0$. In other words, $\omega$ satisfies the homogeneous wave equation, and thus it can be locally decomposed as 
$\omega(u,v) = \omega_1(u) + \omega_2(v)$, where $\omega_1$ and 
$\omega_2$ are smooth real functions (cf. \cite{galvez1} and \cite{spivak}
for further details).

\vspace{.2cm}

Given a flat isometric immersion $f:M\to\Sp^3$ and a local smooth unit
normal vector field $N$ along $f$, let us consider coordinates $(u,v)$
such that the first and the second fundamental forms of $M$ are given
as in \eqref{eq:forms}. The aim of this work is to characterize the flat 
surfaces when the angle function  $\omega$ is linear, i.e., when
$\omega=\omega_1+\omega_2$ is given by
\begin{eqnarray}\label{eq:linear}
\omega_1(u) + \omega_2 (v) = \lambda_1 u + \lambda_2 v + \lambda_3
\end{eqnarray}
where $\lambda_1, \lambda_2, \lambda_3 \in \R$. I order to do this, let us
first construct flat surfaces in $\Sp^3$ whose first and second fundamental
forms are given by \eqref{eq:forms} and with linear angle function. This 
construction is due to Bianchi \cite{bianchi} and Spivak \cite{spivak}.

\vspace{.2cm}

We will use here the division algebra of the quaternions, a very useful approach
to describe explicitly flat surfaces in $\Sp^3$. More precisely, we identify
the sphere $\Sp^3$ with the set of the unit quaternions 
$\{q\in\Qu: q\overline q=1\}$ and $\Sp^2$ with the unit sphere in the
subspace of $\Qu$ spanned by $1$, $i$ and $j$.

\begin{prop}[Bianchi-Spivak representation]\label{teo:BS}
Let $c_a,c_b:I\subset\R\to\Sp^3$ be two curves parametrized by the 
arc length, with curvatures $\kappa_a$ and $\kappa_b$, and whose torsions
are given by $\tau_a=1$ and $\tau_b=-1$. Suppose that $0 \in I$, $c_a(0)=c_b(0)=(1,0,0,0)$ e $c_a'(0) \wedge c_b '(0) \neq 0$. 
Then the map
\[
X(u,v) = c_a (u) \cdot c_b (v)
\]
is a local parametrization of a flat surface in $\Sp^3$, whose first and 
second fundamental forms are given as in \eqref{eq:forms}, where the
angle function satisfies $\omega_1'(u) = -\kappa_a (u)$ and 
$\omega_2'(v) =\kappa_b (v)$.
\end{prop}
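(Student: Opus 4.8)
The plan is to compute directly the first and second fundamental forms of $X(u,v) = c_a(u)\cdot c_b(v)$, using quaternion multiplication and the Frenet equations for the two curves. First I would record that left and right multiplication by a unit quaternion are isometries of $\Sp^3 \subset \Qu \cong \R^4$, so that $\|X_u\| = \|c_a'(u)\|=1$ and $\|X_v\|=\|c_b'(v)\|=1$ automatically, giving $E=G=1$. For the mixed term, $X_u = c_a'(u)\cdot c_b(v)$ and $X_v = c_a(u)\cdot c_b'(v)$, so $F = \langle c_a'(u)\cdot c_b(v),\, c_a(u)\cdot c_b'(v)\rangle$; I would rewrite this by moving the quaternions across the inner product (using $\langle pq, rs\rangle = \langle \overline r p, s\overline q\rangle$ and similar identities) to reduce it to an expression depending on the Frenet frames of $c_a$ and $c_b$ at $u$ and $v$. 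Defining $\cos\omega(u,v) := F$, I expect to see that $\omega$ separates as $\omega_1(u)+\omega_2(v)$ precisely because the torsions are $\pm 1$; this is the crux of the whole construction.

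Next I would turn to the second fundamental form. The unit normal $N$ at $X(u,v)$ is characterized (up to sign) as the unit vector in $T_{X}\Sp^3$ orthogonal to both $X_u$ and $X_v$; writing $N = c_a(u)\cdot n(u,v)\cdot$ — more precisely, transporting back to the identity by left/right translation — I would express $N$ in terms of the Frenet data. Then $II$ has coefficients $\langle X_{uu}, N\rangle$, $\langle X_{uv},N\rangle$, $\langle X_{vv},N\rangle$. Here $X_{uu} = c_a''(u)\cdot c_b(v)$, and by the Frenet equation $c_a'' = -c_a + \kappa_a\, n_a$ (the $-c_a$ coming from the spherical Frenet equations, since these are curves in $\Sp^3$, not $\R^3$), and $-c_a\cdot c_b = -X$ is normal to $\Sp^3$ itself hence contributes nothing to the tangential $II$; similarly $X_{vv}$. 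The point will be that $\langle X_{uu},N\rangle$ and $\langle X_{vv},N\rangle$ vanish — i.e. the coordinate curves are asymptotic — which is exactly where the hypotheses $\tau_a = +1$, $\tau_b = -1$ enter, forcing the binormal directions to align so that $c_a''\cdot c_b$ and $c_a\cdot c_b''$ are orthogonal to $N$. The surviving term $\langle X_{uv},N\rangle = \langle c_a'(u)\cdot c_b'(v), N\rangle$ should work out to $\pm\sin\omega$, matching the sign convention in \eqref{eq:forms}.

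Once $I$ and $II$ have the form \eqref{eq:forms}, flatness is automatic: from \eqref{eq:forms} the extrinsic curvature is $\det II/\det I = (-\sin^2\omega)/(\sin^2\omega) = -1$, so by the Gauss equation the intrinsic curvature is $1 + (-1) = 0$. Finally, to pin down $\omega_1' = -\kappa_a$ and $\omega_2' = \kappa_b$, I would differentiate the relation $\cos\omega(u,v) = F(u,v)$ obtained above in $u$ and in $v$ and compare with the Frenet equations; alternatively, and more cleanly, I would differentiate the orthogonality relations defining $N$ to get a Frenet-type system for the moving frame $\{X_u, X_v, N\}$ along each coordinate curve and read off the angle derivatives. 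The nondegeneracy hypothesis $c_a'(0)\wedge c_b'(0)\neq 0$ guarantees $0 < \omega < \pi$ near $(0,0)$, so $I$ is a genuine Riemannian metric and $X$ is an immersion there.

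I expect the main obstacle to be the bookkeeping in the quaternionic computation of $F$ and of $N$: one must track how left/right translations interact with the Frenet frames and exploit the specific values $\tau_a=1$, $\tau_b=-1$ at the right moment to get the variables to separate. Everything else — the flatness, the asymptotic-line property, the final identification of $\omega_1'$ and $\omega_2'$ — follows fairly mechanically once the expression for $F$ is in hand.
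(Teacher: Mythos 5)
The paper itself gives no proof of this proposition --- it is quoted as a classical result of Bianchi and Spivak --- so your sketch can only be judged on its own terms. The framework you set up is the correct one, and several individual observations are right: left and right multiplication by unit quaternions are isometries of $\Sp^3$, so $E=G=1$; moving factors across the inner product gives $F=\pl A(u),B(v)\pr$ where $A=\overline{c_a}\,c_a'$ and $B=c_b'\,\overline{c_b}$ are curves on the unit sphere $\Sp^2$ of imaginary quaternions; the translated second derivative is $\overline{c_a}\,c_a''=-1+A'$, so the $-c_a$ term of the spherical Frenet equation indeed contributes nothing to $II$; and once $I$ and $II$ have the form \eqref{eq:forms}, flatness follows from $\det II/\det I=-1$ and the Gauss equation.

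The gap is that the two statements carrying all the weight --- that $\omega$ separates as $\omega_1(u)+\omega_2(v)$, and that $\pl X_{uu},N\pr=\pl X_{vv},N\pr=0$ --- are precisely the ones you announce (``I expect to see'', ``the point will be'') without proving, and you offer no mechanism for extracting them from $\tau_a=1$, $\tau_b=-1$. The missing lemma is this: differentiating gives $A'=\kappa_a\,\overline{c_a}\,n_a$, so $A$ moves in $\Sp^2$ with speed $\kappa_a$, and one further differentiation with the Frenet equations shows that the geodesic curvature of $A$ in $\Sp^2$ equals $(\tau_a-1)/\kappa_a$ (the analogous computation for the right translate gives $(\tau_b+1)/\kappa_b$, up to orientation conventions). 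Thus $\tau_a\equiv 1$ and $\tau_b\equiv -1$ say exactly that $A$ and $B$ are great-circle arcs traversed with speeds $\kappa_a$ and $\kappa_b$. Everything you want then drops out at once \emph{provided the two arcs lie on a common great circle}: $\omega$ becomes the difference of angular coordinates along that circle, hence separates with $\omega_1'=\mp\kappa_a$, $\omega_2'=\pm\kappa_b$; the translated normal $\tilde N$ is the fixed pole of the circle, so $\pl A',\tilde N\pr=\pl B',\tilde N\pr=0$, i.e.\ the coordinate curves are asymptotic; and $\pl A(u)B(v),\tilde N\pr=\sin\omega$ gives the mixed coefficient of $II$. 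Note the caveat ``common great circle'': the torsion hypotheses alone only make each translate a great-circle arc, and these circles are determined by the osculating data of $c_a$ and $c_b$ at $0$, so your argument must invoke $c_a(0)=c_b(0)=(1,0,0,0)$ together with a compatibility of the initial frames to place both arcs on one circle (for the explicit curves \eqref{eq:curves-condition} this is verified by direct computation). Without carrying out this step --- which is where the hypotheses actually do their work --- the sketch is a plan rather than a proof.
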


Since the goal here is to find a parametrization such that $\omega$ can
be written as in \eqref{eq:linear}, it follows from Theorem \ref{teo:BS}
that the curves of the representation must have constant curvatures. 
Therefore, we will use the Frenet-Serret formulas in order to obtain curves
with torsion $\pm 1$ and with constant curvatures.

\vspace{.2cm}

Given a real number $r>1$, let us consider the curve 
$\gamma_r:\R\to\Sp^3$ given by
\begin{equation}\label{eq:base}
\gamma_r(u) = \frac{1}{\sqrt{1+r^2}} \left(r\cos\frac{u}{r},r\sin\frac{u}{r}, 
\cos ru,\sin ru\right).
\end{equation}
A straightforward computation shows that $\gamma_r(u)$ is 
parametrized by the arc length, has constant curvature 
$\kappa=\frac{r^2-1}{r}$ and its torsion $\tau$ satisfies $\tau^2=1$.
Observe that $\gamma_r(u)$ is periodic if and only if $r^2\in\mathbb Q$.
When $r$ is a positive integer, $\gamma_r(u)$ is a closed
curve of period $2\pi r$. A curve $\gamma$ as in \eqref{eq:base} will 
be called a {\em base curve}.

\vspace{.2cm}

Now we just have to apply rigid motions to a base curve in order to satisfy
the remaining requirements of the Bianchi-Spivak construction. It is easy
to verify that the curves
\begin{eqnarray}\label{eq:curves-condition}
\begin{array}{rcl}
c_a(u) &=& \dfrac{1}{\sqrt{1+a^2}}(a,0,-1,0) \cdot \gamma_a (u), \\
c_b(v) &=& \dfrac{1}{\sqrt{1+b^2}}T ( \gamma_b (v)) \cdot (b,0,0,-1),
\end{array}
\end{eqnarray}
are base curves, and satisfy $c_a(0)=c_b(0)=(1,0,0,0)$ and 
$c_a'(0)\wedge c_b'(0)\neq0$, where
\begin{equation}
\begin{array}{rcl}
T &=& \left( \begin{array}{cccc}
1&0&0 & 0\\
0&1&0& 0 \\
0&0&0&1\\
0&0&1&0
\end{array} \right). \\
\end{array} 
\end{equation}
 
Therefore we can establish our first main result:

\begin{thm}\label{teo:main1}
The map $X:U\subset\R^2\to\Sp^3$ given by 
\[
X(u,v) = c_a(u) \cdot c_b(v),
\]
where $c_a$ and $c_b$ are the curves given in \eqref{eq:curves-condition},
is a parametrization of a flat surface in $\Sp^3$, whose first and second
fundamental forms are given by
\[
\begin{array}{rcl}
I &=& du^2 +2  \cos \left( \left( \frac{1-a^2}{a} \right) u + \left( \frac{b^2-1}{b} \right) v + c \right) du dv + dv^2, \\
II &=& 2 \sin \left(\left( \frac{1-a^2}{a} \right) u + \left( \frac{b^2-1}{b} \right) v + c \right) du dv,
\end{array}
\]
where $c$ is a constant. Moreover, up to rigid motions, $X$ is invariant under helicoidal motions. 
\end{thm}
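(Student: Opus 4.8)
The plan is to verify the claimed fundamental forms by a direct application of the Bianchi-Spivak representation (Proposition \ref{teo:BS}), and then to exhibit the helicoidal invariance by rewriting $X(u,v)=c_a(u)\cdot c_b(v)$ in a form where the $t$-flow becomes manifest. First I would check that the two curves in \eqref{eq:curves-condition} satisfy the hypotheses of Proposition \ref{teo:BS}: since each $\gamma_r$ has constant curvature $\kappa_r=\frac{r^2-1}{r}$ and torsion $\tau_r^2=1$, and left/right multiplication by fixed unit quaternions are rigid motions of $\Sp^3$ preserving curvature and at most changing the sign of the torsion, I would fix the orientations so that $c_a$ has torsion $\tau_a=1$ and $c_b$ has torsion $\tau_b=-1$ (this is exactly the role of the factors $(a,0,-1,0)$, $T$, and $(b,0,0,-1)$, together with the normalizations). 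The initial-condition requirements $c_a(0)=c_b(0)=(1,0,0,0)$ and $c_a'(0)\wedge c_b'(0)\neq 0$ are the "easy to verify" computations already asserted in the text. Proposition \ref{teo:BS} then gives that $X$ is a flat surface with $I,II$ as in \eqref{eq:forms} and $\omega_1'(u)=-\kappa_a(u)=-\frac{a^2-1}{a}=\frac{1-a^2}{a}$, $\omega_2'(v)=\kappa_b(v)=\frac{b^2-1}{b}$; integrating, $\omega(u,v)=\left(\frac{1-a^2}{a}\right)u+\left(\frac{b^2-1}{b}\right)v+c$ for a constant $c$, which is precisely the linear angle function appearing in the statement. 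So the first part reduces to bookkeeping on Frenet frames already set up in the excerpt.

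For the helicoidal invariance, the idea is to use the explicit form of the base curves. Writing $\gamma_a(u)$ and $\gamma_b(v)$ out via \eqref{eq:base} and carrying out the quaternion products in \eqref{eq:curves-condition}, one obtains $X(u,v)=c_a(u)\cdot c_b(v)$ as an explicit $\Sp^3$-valued function. The key observation is that each base curve is built from the two commuting circular motions $u\mapsto(\cos\frac{u}{r},\sin\frac{u}{r})$ in the first two coordinates and $u\mapsto(\cos ru,\sin ru)$ in the last two; these are exactly the ingredients of the one-parameter groups $\psi_\alpha(t)$ and $\varphi_\beta(t)$ from Section 2. Concretely, I would look for a reparametrization $u=u(t,s)$, $v=v(t,s)$ — essentially an affine change mixing a "flow parameter" $t$ with a "profile parameter" $s$ — under which $X$ factors as $\phi_{\alpha,\beta}(t)\cdot\Gamma(s)$ (up to a fixed rigid motion acting on the left and/or right), where $\phi_{\alpha,\beta}(t)$ is the helicoidal group element \eqref{eq:movhel} for suitable $\alpha,\beta$ determined by $a$ and $b$, and $\Gamma$ is a profile curve. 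Because left translation by a block-diagonal rotation and right translation by quaternion multiplication both act on $\Sp^3\subset\Qu$, the claim "up to rigid motions $X$ is invariant under helicoidal motions" amounts to producing such a factorization.

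The main obstacle I expect is the last step: identifying the correct affine reparametrization $(u,v)\mapsto(t,s)$ and the correct conjugating rigid motion so that the two independent circular frequencies $\frac1a,\ a$ coming from $c_a$ and $\frac1b,\ b$ coming from $c_b$ reorganize into the single helicoidal pair $(\alpha,\beta)$ acting on one side and a genuine profile curve on the other. This is a linear-algebra-over-quaternions matching problem: one must check that after the substitution the $t$-dependence enters only through the commuting rotation block \eqref{eq:movhel}, while all remaining dependence is on $s$ alone. I would handle it by computing $X$ in coordinates, grouping the terms into the $(x_1,x_2)$-plane and the $(x_3,x_4)$-plane, reading off that in each plane the time dependence is a rotation by a frequency that is an integer combination of the base frequencies, and then choosing $t$ and $s$ (and the overall rigid motion) to diagonalize this dependence; once the rotation frequencies in the two planes are $\alpha$ and $\beta$ with $\alpha\neq\pm\beta$ (guaranteed since $a\neq b$ and $ab\neq\pm1$ in the relevant range $r>1$), Definition \ref{def:helicoidal} applies and the invariance follows. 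A sanity check is that the induced metric $I$ computed from the helicoidal parametrization via Proposition \ref{prop:HSF} must agree with the $I$ above, and the angle function must match the linear one — consistency there confirms the reparametrization is the right one.
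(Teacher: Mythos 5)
Your proposal is correct and matches the paper's proof in both steps: the fundamental forms come straight from the Bianchi--Spivak proposition with $\omega_1'=-\kappa_a$ and $\omega_2'=\kappa_b$ integrated to a linear $\omega$, and the helicoidal invariance is obtained by writing $X=g_a\cdot Y(u,v)\cdot g_b$ with $Y=\gamma_a(u)\cdot T(\gamma_b(v))$ explicitly in coordinates and matching the rotation frequencies in the $(x_1,x_2)$- and $(x_3,x_4)$-planes. The only cosmetic difference is that the paper verifies the orbit condition $\phi_{\alpha,\beta}(t)\cdot Y(u,v)=Y\big(u+z(t),v+w(t)\big)$ with linear $z,w$ and $\alpha=\frac{b^2-a^2}{a^2b^2-1}\beta$, rather than producing the factorization $\phi_{\alpha,\beta}(t)\cdot\Gamma(s)$ you describe; these amount to the same frequency-matching computation.
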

\begin{proof}
The statement about the fundamental forms follows directly from the 
Bianchi-Spivak construction. For the second statement, note that
the parametrization $X(u,v)$ can be written as
\[
X(u,v) = g_a \cdot  Y(u,v) \cdot g_b,
\]
where
\[
\begin{array}{rcl}
g_a &=& \dfrac{1}{\sqrt{1+a^2}}(a,0,-1,0), \\
g_b &=& \dfrac{1}{\sqrt{1+b^2}}(b,0,0,-1), 
\end{array}
\]
and
\[
Y(u,v) = \gamma_a(u) \cdot T(\gamma_b(v)).
\]
To conclude the proof, it suffices to show that $Y(u,v)$ is invariant by
helicoidal motions. To do this, we have to find $\alpha$ and $\beta$
such that
\[
\Ph\cdot Y(u,v)=Y\big(u(t),v(t)\big),
\]
where $u(t)$ and $v(t)$ are smooth functions. Observe that $Y(u,v)$
can be written as
\begin{eqnarray}\label{eq:expY}
Y(u,v) = \dfrac{1}{\sqrt{(1+a^2)(1+b^2)}} (y_1, y_2, y_3, y_4),
\end{eqnarray}
where
\begin{equation*}
\begin{array}{rcl}
y_1(u,v) &=& ab \cos \left(\dfrac{u}{a} + \dfrac{v}{b} \right) - \sin (au+ bv), \\
y_2(u,v) &=& ab \sin \left(\dfrac{u}{a} + \dfrac{v}{b} \right) + \cos (au+ bv), \\
y_3(u,v) &=& b \cos \left(au- \dfrac{v}{b} \right) - a\sin\left(\dfrac{u}{a}-bv\right), \\
y_4(u,v) &=& b\sin\left(au-\dfrac{v}{b} \right)+a\cos\left(\dfrac{u}{a}-bv\right). \\
\end{array} \label{parametrizacao-y}
\end{equation*}
A straightforward computation shows that if $\Ph$ is given by 
\eqref{eq:movhel}, we have
\[
u(t) =u+z(t) \quad\textrm{and}\quad v(t)=v+w(t),
\]
where
\begin{eqnarray}\label{eq:z(t)w(t)}
z(t)=\frac{a(b^2-1)}{a^2b^2-1}\beta t \quad\text{and}\quad
w(t)=\frac{b(1-a^2)}{a^2b^2-1}\beta t,
\end{eqnarray}
with 
\begin{eqnarray}\label{eq:alpha-beta}
\alpha=\dfrac{b^2-a^2}{a^2b^2-1}\beta,
\end{eqnarray}
showing that $Y(u,v)$ is invariant by helicoidal motions. Observe that
when $a=\pm b$ we have $\alpha=0$, i.e., $X$ is a rotational surface
in $\Sp^3$. 
\end{proof}

\begin{rem}
{\em It is important to note that the constant $a$ and $b$ in 
\eqref{eq:curves-condition} were considered in $(1,+\infty)$ in order to 
obtain non-zero constant curvatures with its well defined torsions,
and then to apply the Bianchi-Spivak construction. This is not a strong
restriction since the curvature function $\kappa(t)=\frac{t^2-1}{t}$ assumes
all values in $\R \setminus\{0\}$ when $t \in (1, + \infty)$. However, by 
taking $a=1$ and $b>1$ in \eqref{eq:curves-condition}, a long but 
straightforward computation gives an unit normal vector field
\[
N(u,v) = \dfrac{1}{\sqrt{2(1+b^2)}} (n_1, n_2, n_3, n_4),
\]
where
\[
\begin{array}{rcl}
n_1(u,v) &=& -b \sin \left(u + \dfrac{v}{b} \right) + \cos (u+ bv), \\
n_2(u,v) &=& b \cos \left(u + \dfrac{v}{b} \right) + \sin (u+ bv), \\
n_3(u,v) &=& b \sin \left(u- \dfrac{v}{b} \right) - \cos\left(a-bv\right), \\
n_4(u,v) &=& -b\cos \left(u-\dfrac{v}{b} \right) - \sin\left(u-bv\right). \\
\end{array}
\]
Therefore, one shows that this parametrization is also by asymptotic 
lines where the angle function is given by 
$\omega(u,v) =\frac{1-b^2}{b}v -\frac{\pi}{2}$. Moreover, this is a
parametrization of a Hopf cylinder, since the unit normal vector field $N$
makes a constant angle with the Hopf vector field (see section 4).}
\end{rem}

We will use the parametrization $Y(u,v)$ given in \eqref{eq:expY}, compose with the stereographic projection in $\R^3$,  to visualize some examples with the corresponding constants $a$ and $b$.

\begin{figure}[!htb]
\begin{minipage}[]{0.45\linewidth}
\includegraphics[width=\linewidth]{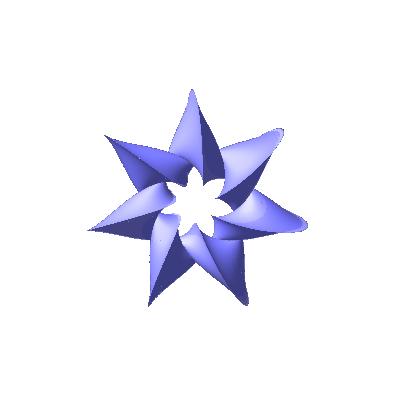}
\end{minipage}
\begin{minipage}[]{0.45\linewidth}
\includegraphics[width=\linewidth]{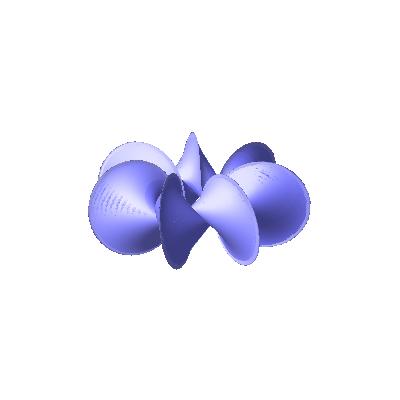}
\end{minipage}
\caption{$a=2$ and $b=3$.}
\end{figure}

\begin{figure}[!htb]
\begin{minipage}[]{0.45\linewidth}
\includegraphics[width=\linewidth]{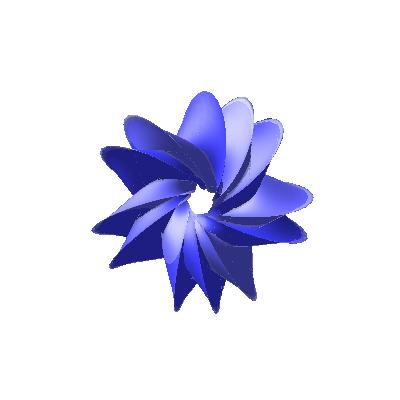}
\end{minipage}
\begin{minipage}[]{0.45\linewidth}
\includegraphics[width=\linewidth]{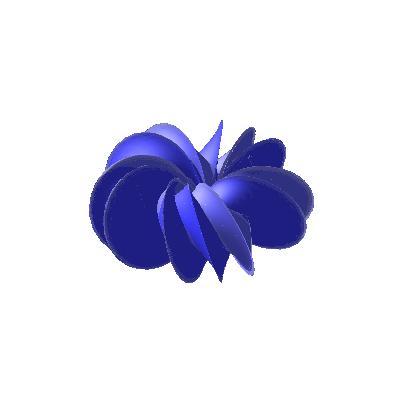}
\end{minipage}
\caption{$a=\sqrt2$ and $b=3$.}
\end{figure}

\begin{figure}[!htb]
\begin{minipage}[]{0.45\linewidth}
\includegraphics[width=\linewidth]{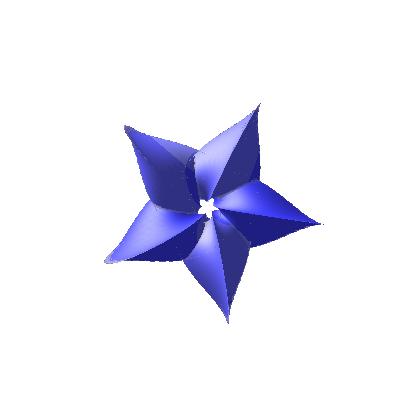}
\end{minipage}
\begin{minipage}[]{0.45\linewidth}
\includegraphics[width=\linewidth]{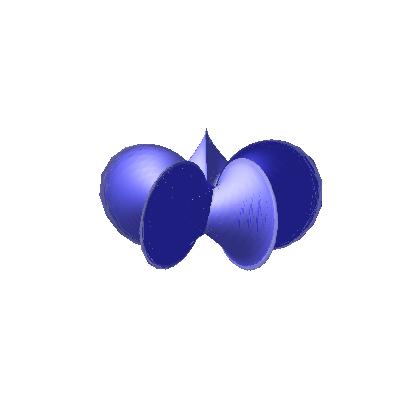}
\end{minipage}
\caption{$a=\sqrt3$ and $b=\sqrt2$.}
\end{figure}

\section{Constant angle surfaces}

In this section we will complete our classification of helicoidal flat 
surfaces in $\Sp^3$, by establishing our second main theorem, that
can be seen as a converse of Theorem \ref{teo:main1}. It is well
known that the Hopf map $h:\Sp^3\to\Sp^2$ is a Riemannian 
submersion and the standard orthogonal basis of $\Sp^3$
\[
E_1(z,w)=i(z,w), \ \ E_2(z,w)=i(-\overline w,\overline z), \ \ 
E_3(z,w)=(-\overline w,\overline z)
\]
has the property that $E_1$ is vertical and $E_2$, $E_3$ are horizontal.
The vector field $E_1$, usually called the Hopf vector field, is an unit
Killing vector field. 

\vspace{.2cm}

Constant angle surface in $\Sp^3$ are those surfaces whose its
unit normal vector field makes a constant angle with the Hopf vector
field $E_1$. The next result states that flatness of a helicoidal surface
in $\Sp^3$ turns out to be equivalent to constant angle surface.

\begin{prop}\label{prop:CAS}
A helicoidal surface in $\Sp^3$, locally parametrized by 
\eqref{eq:param-helicoidal} and with the profile curve $\gamma$ parametrized
by \eqref{eq:param-gamma}, is a flat surface if and only if it is a 
constant angle surface.
\end{prop}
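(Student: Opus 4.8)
The plan is to show that, along any helicoidal surface parametrized as in \eqref{eq:param-helicoidal}--\eqref{eq:param-gamma}, the angle between the unit normal $N$ and the Hopf vector field $E_1$ is controlled by a single function of the profile parameter $s$, to compute that function explicitly, and to recognize the vanishing of its $s$--derivative as precisely the flatness equation \eqref{ode-helicoidal} of Proposition \ref{prop:HSF}.

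First I would reduce the constant angle condition to a statement in $s$ alone. In coordinates $(x_1,x_2,x_3,x_4)$ the Hopf field is the linear map $E_1(p)=Jp$, with $J=\mathrm{diag}(J_0,J_0)$ and $J_0=\bigl(\begin{smallmatrix}0&-1\\1&0\end{smallmatrix}\bigr)$. Since the two diagonal blocks of $\Ph$ in \eqref{eq:movhel} are planar rotations, $\Ph$ commutes with $J$, hence $E_1(\Ph\cdot q)=\Ph\cdot E_1(q)$; and because $\Ph\in\Or(4)$ while $\tilde N$ in \eqref{normal-field} has the form $\Ph\cdot v(s)$, with $v(s)$ built from $\gamma(s)$ and $\gamma'(s)$, one gets
\[
\langle N,E_1\rangle\big|_{X(t,s)}=\Big\langle\frac{v(s)}{\|v(s)\|},\,E_1(\gamma(s))\Big\rangle ,
\]
which is independent of $t$. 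So the surface is a constant angle surface if and only if this is a constant function of $s$.

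Next I would carry out that computation using \eqref{eq:param-gamma}. Writing $\gamma=(x_1,x_2,x_3,0)$, the first three entries of $v(s)$ are $\beta x_3$ times the cross product $(x_1',x_2',x_3')\times(x_1,x_2,x_3)$ and the last entry is $-\alpha x_3'$, while $E_1(\gamma)=(-x_2,x_1,0,x_3)$. Using $x_1^2+x_2^2=\cos^2\varphi$, $x_3=\sin\varphi$, $x_3'=\cos\varphi\,\varphi'$ and $x_1x_1'+x_2x_2'=-\sin\varphi\cos\varphi\,\varphi'$, the inner product collapses to $(\beta-\alpha)\sin\varphi\cos\varphi\,\varphi'$. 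For the norm, since $(x_1,x_2,x_3)$ is a unit-speed curve on $\Sp^2$ one has $|(x_1',x_2',x_3')\times(x_1,x_2,x_3)|=1$, so $\|v(s)\|^2=\beta^2\sin^2\varphi+\alpha^2\cos^2\varphi\,(\varphi')^2$, which is exactly the quantity $EG-F^2$ appearing in the proof of Proposition \ref{prop:HSF}. Thus, abbreviating $Q=(\beta-\alpha)\sin\varphi\cos\varphi\,\varphi'$ and $P=EG-F^2>0$,
\[
\langle N,E_1\rangle=\frac{Q}{\sqrt{P}} .
\]

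Finally I would close the loop. The key algebraic fact is that $E=\alpha^2\cos^2\varphi+\beta^2\sin^2\varphi$ satisfies $E_s=2(\alpha+\beta)Q$. Substituting this into equation \eqref{eq:gauss}, i.e. into $E_sP_s-2PE_{ss}=0$ with subscript meaning $d/ds$, turns it into $-2(\alpha+\beta)\bigl(2PQ'-QP'\bigr)=0$; since $\alpha\neq\pm\beta$, the proof of Proposition \ref{prop:HSF} shows the surface is flat if and only if $2PQ'-QP'=0$. On the other hand, $\bigl(Q/\sqrt{P}\bigr)'=(2PQ'-QP')/(2P^{3/2})$ and $P>0$, so the constant angle condition $\langle N,E_1\rangle'\equiv 0$ is equivalent to the very same equation $2PQ'-QP'=0$. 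Hence flatness and the constant angle property coincide, which proves the proposition. The only real work is the bookkeeping in the normal computation and spotting the identity $E_s=2(\alpha+\beta)Q$, which is what makes the two conditions literally the same equation (up to the nonzero factor $\alpha+\beta$); no analytic subtleties arise because $P>0$ on the regular surface, so $C^2$ regularity of $\varphi$ suffices.
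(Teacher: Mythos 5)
Your proposal is correct and follows essentially the same route as the paper: both compute $\cos\nu=\langle N,E_1\rangle$ as the explicit function $(\beta-\alpha)\varphi'\sin\varphi\cos\varphi/\sqrt{\beta^2\sin^2\varphi+\alpha^2(\varphi')^2\cos^2\varphi}$ of $s$ alone and identify the vanishing of its $s$-derivative with the flatness equation of Proposition \ref{prop:HSF}. The only difference is cosmetic: where the paper expands $\frac{d}{ds}(\cos\nu)$ and matches it term-by-term against \eqref{ode-helicoidal}, you package the comparison through the identity $E_s=2(\alpha+\beta)Q$, which turns \eqref{eq:gauss} directly into $-2(\alpha+\beta)\cdot 2P^{3/2}\left(Q/\sqrt{P}\right)'$ --- a slightly cleaner bookkeeping of the same computation.
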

\begin{proof}
Let us consider the Hopf vector field 
\[
E_1(x_1, x_2, x_3, x_4) = (-x_2, x_1, -x_4, x_3),
\]
and let us denote by $\nu$ the angle between $E_1$ and the normal vector
field $N$ along the surface given in \eqref{normal-field}. Along the 
parametrization \eqref{eq:param-helicoidal}, we can write the vector field
$E_1$ as
\[
E_1(X(t,s)) = \Ph(-x_2, x_1,0,x_3).
\]
Then, since $\Ph\in O(4)$, we have
\[
\pl N,E_1\pr(t,s)=\pl N,E_1\pr(s)=
(\beta-\alpha) \frac{x_3 x_3'}{\sqrt{\beta^2x_3^2 + \alpha^2 (x_3')^2}}.
\]
By considering the parametrization \eqref{eq:param-gamma} for the
profile curve $\gamma$, the angle $\nu=\nu(s)$ between $N$ and $E_1$
is given by
\begin{eqnarray}\label{eq:cosnu}
\cos \nu (s) = (\beta-\alpha) \frac{\varphi' \sin\varphi \cos\varphi}
{\sqrt{\beta^2\sin^2 \varphi + \alpha^2 (\varphi')^2 \cos^2 \varphi}}.
\end{eqnarray}
By taking the derivative in \eqref{eq:cosnu}, we have
\[
\frac{d}{ds}(\cos\nu(s))=\frac{(\beta-\alpha)\big(\beta^2\varphi''\sin^3 
\varphi\cos\varphi - \beta^2(\varphi')^2 \sin^4 \varphi + 
\alpha^2 (\varphi')^2 \cos^4 \varphi\big)}
{\big(\beta^2\sin^2\varphi+\alpha^2(\varphi')^2 \cos^2 \varphi \big)^{\frac{3}{2}}},
\]
and the conclusion follows from the Proposition \ref{prop:HSF}.
\end{proof}

Given a number $\epsilon>0$, let us recall that the Berger sphere 
$\Sp^3_\epsilon$ is defined as the sphere $\Sp^3$ endowed with the 
metric
\begin{eqnarray}\label{eq:berger}
\pl X,Y\pr_\epsilon=\pl X,Y\pr+(\epsilon^2-1)\pl X,E_1\pr\pl Y,E_1\pr,
\end{eqnarray}
where $\pl,\pr$ denotes de canonical metric of $\Sp^3$. We define 
constant angle surface in $\Sp^3_\epsilon$ in the same way that
in the case of $\Sp^3$. Constant angle surfaces in the Berger spheres
were characterized by Montaldo and Onnis \cite{MO}. More precisely,
if $M$ is a constant angle surface in the Berger sphere, with constant
angle $\nu$, then there exists a local parametrization $F(u,v)$ given by
\begin{eqnarray}\label{eq:paramMO}
F(u,v)=A(v)b(u),
\end{eqnarray}
where
\begin{eqnarray}\label{eq:curveb}
b(u)=\big(\sqrt{c_1}\cos(\alpha_1u),\sqrt{c_1}\sin(\alpha_1u),
\sqrt{c_2}\cos(\alpha_2u),\sqrt{c_2}\sin(\alpha_2u)\big)
\end{eqnarray}
is a geodesic curve in the torus $\Sp^1(\sqrt{c_1})\times\Sp^1(\sqrt{c_2})$,
with
\[
c_{1,2}=\frac{1}{2}\mp\frac{\epsilon\cos\nu}{2\sqrt{B}},\ 
\alpha_1=\frac{2B}{\epsilon}c_2, \ \alpha_2=\frac{2B}{\epsilon}c_1, \
B=1+(\epsilon^2-1)\cos^2\nu,
\]
and 
\begin{eqnarray}\label{eq:xi_i}
A(v)=A(\xi,\xi_1,\xi_2,\xi_3)(v)
\end{eqnarray}
is a $1$-parameter family of $4\times4$ orthogonal matrices given by
\[
A(v)=A(\xi)\cdot\tilde A(v),
\]
where
\[
A(\xi)=\left(
\begin{array}{cccc}
1 & 0 & 0 & 0 \\
0 & 1 & 0 & 0 \\
0 & 0 & \sin\xi & \cos\xi \\
0 & 0 & -\cos\xi & \sin\xi
\end{array}
\right)
\]
and
\[
\tilde A(v)=\left(
\begin{array}{rrrr}
\cos\xi_1\cos\xi_2 & -\cos\xi_1\sin\xi_2 & \sin\xi_1\cos\xi_2 & -\sin\xi_1\sin\xi_3 \\
\cos\xi_1\sin\xi_2 & -\cos\xi_1\cos\xi_2 & \sin\xi_1\sin\xi_3 & \sin\xi_1\cos\xi_3 \\
-\sin\xi_1\cos\xi_3 & \sin\xi_1\sin\xi_3 & \cos\xi_1\cos\xi_2 & \cos\xi_1\sin\xi_2 \\
\sin\xi_1\sin\xi_3 & -\sin\xi_1\cos\xi_3 & -\cos\xi_1\sin\xi_2 & \cos\xi_1\cos\xi_2
\end{array}
\right),
\]
$\xi$ is a constant and the functions $\xi_i(v)$, $1\leq i\leq 3$, satisfy
\begin{eqnarray}\label{eq:xis}
\cos^2(\xi_1(v))\xi_2'(v)-\sin^2(\xi_1(v))\xi_3'(v)=0.
\end{eqnarray}

In the next result we obtain another relation between the function $\xi_i$,
given in \eqref{eq:xi_i}, and the angle function $\nu$.

\begin{prop}
{\em The functions $\xi_i(v)$, given in \eqref{eq:xi_i}, satisfy the following relation:
\begin{eqnarray}\label{eq:relxi_inu}
(\xi_1'(v))^2+(\xi_2'(v))^2\cos^2(\xi_1(v))+(\xi_3'(v))\sin^2(\xi_1(v))=
\sin^2\nu,
\end{eqnarray}
where $\nu$ is the angle function of the surface $M$.}
\end{prop}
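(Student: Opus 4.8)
The plan is to extract the claimed identity directly from the Montaldo–Onnis parametrization \eqref{eq:paramMO}, by computing the first fundamental form of $M$ in the $(u,v)$ coordinates and comparing it with the intrinsic data forced by the constant angle condition. Concretely, I would start from $F(u,v)=A(v)b(u)$, where $A(v)=A(\xi)\tilde A(v)$ is orthogonal and $b(u)$ is the explicit geodesic in the Clifford torus. Since $A(v)\in\mathrm{O}(4)$, one has $F_v=A'(v)b(u)=A(\xi)\tilde A'(v)b(u)$ and $F_u=A(v)b'(u)$, so all the metric coefficients are controlled by $\tilde A'(v)\tilde A(v)^{-1}$ (a skew-symmetric matrix whose entries are linear combinations of $\xi_1',\xi_2',\xi_3'$ with coefficients in $\cos\xi_i,\sin\xi_i$) together with the fixed vector $b(u)$ and its derivative. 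The left-hand side of \eqref{eq:relxi_inu} is precisely the natural quadratic form $\|\tilde A'(v)w\|^2$ built from that skew matrix, once one uses the constraint \eqref{eq:xis}; so the first step is to recognize the left side as an intrinsic quantity attached to the curve $v\mapsto A(v)$ alone, independent of $u$.

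Next I would bring in the geometric meaning of $\nu$. On a constant angle surface the unit normal $N$ makes the fixed angle $\nu$ with the Hopf field $E_1$, hence the tangent plane meets the vertical distribution at a controlled angle; equivalently, the vertical projection of the tangent space has a definite ``size'' measured by $\sin\nu$ (the horizontal component of $E_1$ restricted to $TM$ has length $\sin\nu$). In the Montaldo–Onnis normal form the $u$-curves are, by construction, the horizontal lifts direction and the $v$-direction carries the vertical part; so I would compute $\langle F_v,E_1\rangle$ and $|F_v|^2$ (in the round metric $\langle,\rangle$ of $\Sp^3$, i.e. $\epsilon=1$ is not assumed, but the identity to be proved is metric-independent in $u,v$) and show that the component of $F_v$ orthogonal to both $F_u$ and $E_1$ has squared length exactly the left-hand side of \eqref{eq:relxi_inu}, while the constant angle condition forces that length to equal $\sin^2\nu$. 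The constraint \eqref{eq:xis} is what makes the cross term with $F_u$ vanish, so that the $v$-curve is already ``normalized'' and the bookkeeping collapses to the stated three-term sum.

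More concretely, the computational heart is: evaluate the skew matrix $S(v):=A(v)^{-1}A'(v)=\tilde A(v)^{-1}\tilde A'(v)$, observe that $|F_v|^2=|S(v)b(u)|^2=b(u)^{\top}S(v)^{\top}S(v)b(u)=-b(u)^{\top}S(v)^2b(u)$, and that $S(v)^{\top}S(v)$, after imposing \eqref{eq:xis}, is a constant multiple of a fixed symmetric matrix whose relevant eigenvalue/quadratic-form value on $b(u)$ is $(\xi_1')^2+(\xi_2')^2\cos^2\xi_1+(\xi_3')^2\sin^2\xi_1$ — note the paper's display has a typo writing $(\xi_3')$ rather than $(\xi_3')^2$, and I would silently use the correct square. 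Then I would compute $\langle F_u,F_v\rangle$ and $\langle F_v,E_1\rangle$ to confirm that subtracting off the $F_u$- and $E_1$-components changes nothing (the first by \eqref{eq:xis}, the second because $b(u)$ lies in the plane fixed by the vertical rotation generator), and finally identify the leftover with $|N^{\perp\text{-part}}|$ and hence with $\sin^2\nu$ via $\cos\nu=\langle N,E_1\rangle$ and $|E_1|=1$.

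The main obstacle I anticipate is purely organizational rather than conceptual: the matrix $\tilde A(v)$ is a $4\times4$ array with all sixteen entries trigonometric in $\xi_1,\xi_2,\xi_3$, so writing out $S(v)=\tilde A(v)^{-1}\tilde A'(v)$ and then $S(v)^{\top}S(v)$ honestly is a lengthy exercise in trigonometric simplification, and one must be careful that the Montaldo–Onnis normal form is being used with the exact same conventions (there appear to be sign/entry inconsistencies in the displayed $\tilde A(v)$, e.g. the $\xi_3$ versus $\xi_2$ slots, that one has to reconcile against \eqref{eq:xis} before trusting the algebra). The cleanest route is to avoid the brute-force expansion by working invariantly: express $S(v)$ in the Lie algebra $\mathfrak{so}(4)$ in terms of a basis adapted to the splitting induced by $E_1$, use that $b(u)$ sweeps a flat torus so that $b(u)^{\top}(\cdot)b(u)$ only sees the block-diagonal part of any symmetric matrix, and reduce \eqref{eq:relxi_inu} to a one-line identity in that block. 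I expect that once \eqref{eq:xis} is substituted, the off-diagonal contributions cancel and the claimed formula drops out; verifying that cancellation is the only genuinely delicate point.
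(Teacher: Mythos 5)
Your proposal is correct and follows essentially the same route as the paper: both identify the left-hand side as $\langle F_v,F_v\rangle$ computed from the explicit Montaldo--Onnis matrix $\tilde A(v)$ (the paper does this at $u=0$ via the columns of $\tilde A$, you via the skew matrix $\tilde A^{-1}\tilde A'$, which is the same computation), and both identify the right-hand side with the fact that $\langle F_v,F_v\rangle=\sin^2\nu$ in the Montaldo--Onnis normal form, which the paper simply cites from \cite{MO} rather than re-deriving from the constant-angle condition as you sketch. Your observation that the displayed relation should read $(\xi_3'(v))^2$ is also correct; the extra step of projecting off the $F_u$- and $E_1$-components is not needed in the paper's argument but does no harm.
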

\begin{proof}
With respect to the parametrization $F(u,v)$, given in \eqref{eq:paramMO},
we have
\[
F_v=A'(v)\cdot b(u)=A(\xi)\cdot\tilde A'(v)\cdot b(u).
\]
We have $\pl F_v,F_v\pr=\sin^2\nu$ (cf. \cite{MO}). On the other hand, 
if we denote by $c_1$, $c_2$, $c_3$, $c_4$ the columns of $\tilde A$, 
we have
\[
\pl F_v,F_v\pr\vert_{u=0}=g_{11}\pl c_1',c_1'\pr+g_{33}\pl c_3',c_3'\pr.
\]
As $\pl c_1',c_1'\pr=\pl c_3',c_3'\pr$, $\pl c_1',c_3'\pr=0$ and 
$g_{11}+g_{33}=1$, a straightforward computation gives
\begin{eqnarray*}
\sin^2\nu &=& \pl F_v,F_v\pr = (g_{11}+g_{33})\pl c_1',c_1'\pr \\
&=& (\xi_1'(v))^2+(\xi_2'(v))^2\cos^2(\xi_1(v))+(\xi_3'(v))\sin^2(\xi_1(v)),
\end{eqnarray*}
and we conclude the proof.
\end{proof}

\begin{thm}
Let $M$ be a helicoidal flat surface in $\Sp^3$, locally parametrized by 
\eqref{eq:param-helicoidal}, and whose profile curve $\gamma$ is given
by \eqref{eq:param-gamma}. Then $M$ admits a new local 
parametrization such that the fundamental forms are given as in
\eqref{eq:forms} and $\omega$ is a linear function.
\end{thm}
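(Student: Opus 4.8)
The plan is to reverse the logic of Theorem~\ref{teo:main1}: there we produced, from a pair of constants $(a,b)$, a helicoidal flat surface with asymptotic parameters and linear angle function; here we must show that \emph{every} helicoidal flat surface arises this way, up to reparametrization. The bridge is Proposition~\ref{prop:CAS}, which tells us that a helicoidal flat surface is a constant angle surface in $\Sp^3$, and hence — viewing $\Sp^3$ as the Berger sphere $\Sp^3_\epsilon$ with $\epsilon=1$ — it falls under the Montaldo--Onnis parametrization \eqref{eq:paramMO}. So the first step is to invoke Proposition~\ref{prop:CAS} to get the constant angle $\nu$ from \eqref{eq:cosnu}, and then to write $M$ in the form $F(u,v)=A(v)b(u)$ with $b(u)$ the torus geodesic \eqref{eq:curveb} and $A(v)$ the orthogonal family \eqref{eq:xi_i}; with $\epsilon=1$ the constants simplify to $B=1$, $c_{1,2}=\tfrac12(1\mp\cos\nu)$, $\alpha_1=2c_2$, $\alpha_2=2c_1$.

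Next I would compute the first and second fundamental forms of $F(u,v)$ directly. Since $b(u)$ is a unit-speed geodesic in the flat torus, $\langle F_u,F_u\rangle=1$; the proposition just proved gives $\langle F_v,F_v\rangle=\sin^2\nu$; and the constraint \eqref{eq:xis} on the $\xi_i$ together with the geodesic equation for $b$ should force $\langle F_u,F_v\rangle$ to be a constant, call it $\cos\vartheta$. The key point is that for a flat surface the extrinsic curvature is $-1$ (Gauss equation), and once one knows the surface is flat and the coordinate net is a Tschebyscheff net one can rescale $u$ and $v$ by the constant factors needed to make both tangent fields unit-speed; explicitly, setting $\tilde u=u$, $\tilde v=(\sin\nu)\,v$ converts $I$ into $d\tilde u^2+2\cos\widetilde\omega\,d\tilde u\,d\tilde v+d\tilde v^2$ with the cross term now a genuine angle function $\widetilde\omega$. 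One then checks, via $\omega_{\tilde u\tilde v}=0$, that $\widetilde\omega$ is affine in $(\tilde u,\tilde v)$; the cleanest way is to show $\widetilde\omega_{\tilde u}$ and $\widetilde\omega_{\tilde v}$ are each constant, which reduces to showing the curvatures of the two coordinate curves of $F$ are constant — and that is immediate because $b(u)$ has constant curvature (it is a torus geodesic, i.e.\ a product of two circular arcs) and, by the structure of $A(v)$ and \eqref{eq:xis}, the $v$-curve is congruent to a curve of the same type.

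An alternative, and perhaps more transparent, route for the last step is to match $F(u,v)$ against the explicit model $Y(u,v)$ of \eqref{eq:expY}: since both are constant angle surfaces with the same constant angle $\nu$, and the Montaldo--Onnis parametrization is unique up to the choice of the integration data $\xi,\xi_1,\xi_2,\xi_3$ and up to ambient isometry, one identifies the free constants so that $F$ coincides, after an element of $\Or(4)$ acting on the left and right (which is how helicoidal invariance was encoded in Theorem~\ref{teo:main1}), with $g_a\cdot Y(u,v)\cdot g_b$ for suitable $a,b$ determined by $\cos\nu$ through \eqref{eq:alpha-beta} and \eqref{eq:cosnu}. Then the asymptotic-line parametrization with linear angle function is inherited from Theorem~\ref{teo:main1} verbatim.

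The main obstacle I anticipate is the bookkeeping in the middle step: verifying that the Montaldo--Onnis coordinates $(u,v)$, after the constant rescaling, actually \emph{are} asymptotic coordinates for the second fundamental form — i.e.\ that $II$ has no $du^2$ or $dv^2$ terms. This is where flatness must be used essentially rather than formally: one needs that the normal curvatures of the coordinate curves vanish, equivalently that each coordinate curve is an asymptotic curve, and this does not follow from the constant angle condition alone for a general $\epsilon$; it is special to $\epsilon=1$ together with $K_{\mathrm{ext}}=-1$. Making that implication precise — presumably by computing $\langle F_{uu},N\rangle$ and $\langle F_{vv},N\rangle$ and showing the flatness ODE from Proposition~\ref{prop:HSF} (equivalently, the derivative of \eqref{eq:cosnu} vanishing) is exactly what kills them — is the technical heart of the argument; the rest is the same constant-curvature Frenet bookkeeping already used to build the base curves $\gamma_r$.
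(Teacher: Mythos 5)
Your overall strategy is the paper's: invoke Proposition~\ref{prop:CAS} to see that $M$ is a constant angle surface, put it in the Montaldo--Onnis form \eqref{eq:paramMO} with $\epsilon=1$, and then match it against the explicit model $Y(u,v)$ of \eqref{eq:expY} so that the asymptotic coordinates and the linear angle function are inherited from Theorem~\ref{teo:main1}. But there is a genuine gap at the matching step, and it is the same gap in both of your routes. The Montaldo--Onnis data $\xi_1,\xi_2,\xi_3$ are arbitrary \emph{functions} of $v$ subject only to the single scalar constraint \eqref{eq:xis} (together with the norm relation \eqref{eq:relxi_inu}); the constant angle $\nu$ does not determine them. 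Consequently the family of constant angle surfaces with a given $\nu$ has functional degrees of freedom and contains many non-helicoidal surfaces (Hopf cylinders over arbitrary spherical curves are the extreme example), so it is simply not true that ``one identifies the free constants'' to land on $Y(u,v)$: there are no free constants yet, only free functions. For the same reason, the claim in your first route that ``by the structure of $A(v)$ and \eqref{eq:xis} the $v$-curve is congruent to a curve of the same type'' (hence of constant curvature) is false for a general choice of $\xi_i(v)$; it is exactly what has to be proved, and it cannot be proved from the constant angle condition alone.

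The missing idea is that the helicoidal invariance must be used a second time, beyond Proposition~\ref{prop:CAS}. One has to write out the condition $\Ph\cdot F(u,v)=F\big(u(t),v(t)\big)$ coordinate by coordinate in the Montaldo--Onnis form; this yields functional equations of the type $\xi_2(v(t))+s(t)/a=\xi_2(v)+s/a+\alpha t$ (and three companions), from which one solves for $s(t)$, shows $v'(t)\neq 0$, and deduces that $\xi_2'$ and $\xi_3'$ are constant multiples of $1/v'$. Feeding this into \eqref{eq:xis} forces $\xi_1$ to be constant, which then lets one set $\cos^2\xi_1=b^2/(1+b^2)$, normalize $\xi_3(v)=bv$, $\xi_2(v)=v/b-\pi/2$ via \eqref{eq:relxi_inu}, and recover the relation \eqref{eq:alpha-beta} between $\alpha$ and $\beta$. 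Only after this reduction does $F$ coincide with $Y(u,v)$ up to ambient isometry and linear reparametrization, and only then does the conclusion follow ``verbatim'' from Theorem~\ref{teo:main1}. Your worry at the end about whether the coordinates are asymptotic is, by contrast, not where the difficulty lies: once the surface is identified with the Bianchi--Spivak product $c_a(u)\cdot c_b(v)$, asymptotic coordinates and the linear angle function come for free.
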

\begin{proof}
Consider the unit normal vector field $N$ associated to the local
parametrization $X$ of $M$ given in \eqref{eq:param-helicoidal}. 
From Proposition \ref{prop:CAS}, the angle between $N$ and the
Hopf vector field $E_1$ is constant. Hence, it follows from 
\cite{MO} (Theorem 3.1) that $M$ can be locally parametrized as
in \eqref{eq:paramMO}.
By taking $\epsilon=1$ in \eqref{eq:berger}, we can reparametrize the curve 
$b$ given in \eqref{eq:curveb} in such a way that the new curve is a base
curve $\gamma_a$.
In fact, by taking $\epsilon=1$, we obtain $B=1$, and so 
$\alpha_1=2c_2$ and $\alpha_2=2c_1$. This implies that
$\|b'(u)\|=2\sqrt{c_1c_2}$, because $c_1+c_2=1$. Thus, by writing
$s=2\sqrt{c_1c_2}$, the new parametrization of $b$ is given by
\[
b(s)=\frac{1}{\sqrt{1+a^2}} \left(a\cos\frac{s}{a},a\sin\frac{s}{a}, 
\cos(as),\sin(as)\right), 
\]
where $a=\sqrt{c_1/c_2}$. On the other hand, we have
\[
A(v)\cdot b(s)=A(\xi)X(v,s),
\]
where $X(v,s)$ can be written as
\[
X(v,s)=\frac{1}{\sqrt{1+a^2}}(x_1,x_2,x_3,x_4),
\]
with
\begin{eqnarray}\label{eq:coef_xi}
\begin{array}{rcl}
x_1 &=& a\cos\xi_1\cos\left(\dfrac{s}{a}+\xi_2\right)+\sin\xi_1\cos(as+\xi_3), \\
x_2 &=& a\cos\xi_1\sin\left(\dfrac{s}{a}+\xi_2\right)+\sin\xi_1\sin(as+\xi_3), \\
x_3 &=& -a\sin\xi_1\cos\left(\dfrac{s}{a}-\xi_3\right)+\cos\xi_1\cos(as-\xi_2), \\
x_4 &=& -a\sin\xi_1\sin\left(\dfrac{s}{a}-\xi_3\right)+\cos\xi_1\sin(as-\xi_2). \\
\end{array} 
\end{eqnarray}
On the other hand, the product $\Ph\cdot X(v,s)$ can be written as
\[
\Ph\cdot X(v,s)=\frac{1}{\sqrt{1+a^2}}(z_1,z_2,z_3,z_4),
\]
where
\begin{eqnarray}\label{eq:coef_zi}
\begin{array}{rcl}
z_1 &=& a\cos\xi_1\cos\left(\dfrac{s}{a}+\xi_2+\alpha t\right)
+\sin\xi_1\cos\left(as+\xi_3+\alpha t\right), \\
z_2 &=& a\cos\xi_1\sin\left(\dfrac{s}{a}+\xi_2+\alpha t\right)
+\sin\xi_1\sin\left(as+\xi_3+\alpha t\right), \\
z_3 &=& -a\sin\xi_1\cos\left(\dfrac{s}{a}-\xi_3+\beta t\right)
+\cos\xi_1\cos\left(as-\xi_2+\beta t\right), \\
z_4 &=& -a\sin\xi_1\sin\left(\dfrac{s}{a}-\xi_3+\beta t\right)
+\cos\xi_1\sin\left(as-\xi_2+\beta t\right).
\end{array} 
\end{eqnarray}
As the surface is helicoidal, we have
\[
\Ph\cdot X(v,s)=X(v(t),s(t)),
\]
for some smooth functions $v(t)$ and $s(t)$, which satisfy the
following equations:
\begin{eqnarray}
\xi_2(v(t)) + \dfrac{s(t)}{a} = \xi_2(v)+ \dfrac{s}{a}+\alpha t, \label{eq:xi2-alpha} \\
\xi_3(v(t)) + a s(t) = \xi_3 (v) + a s + \alpha t, \label{eq:xi3-alpha} \\
\dfrac{s(t)}{a} - \xi_3(v(t)) = \dfrac{s}{a} - \xi_3(v) + \beta t, \label{eq:xi3-beta} \\
a s(t) - \xi_2(v(t)) = as - \xi_2(v) + \beta t. \label{eq:xi2-beta}
\end{eqnarray}
It follows directly from \eqref{eq:xi2-alpha} and \eqref{eq:xi2-beta} that 
\begin{equation}
s(t) = s + \dfrac{a(\alpha+\beta)}{a^2+1} t. \label{eq:s(t)}
\end{equation}
Note that the same conclusion is obtained by using \eqref{eq:xi3-alpha}
and \eqref{eq:xi3-beta}. By substituting the expression of $s(t)$ given in 
\eqref{eq:s(t)} on the equations \eqref{eq:xi2-alpha} -- \eqref{eq:xi2-beta},
one has
\begin{eqnarray}
\xi_2(v(t))= \xi_2(v) + \left( \dfrac{a^2 \alpha - \beta}{a^2+1} \right) t, \label{eq:xi2(v(t))}  \\
\xi_3(v(t))= \xi_3(v) + \left( \dfrac{\alpha - a^2 \beta}{a^2+1} \right) t. 
\label{eq:xi3(v(t))} 
\end{eqnarray}
From now on we assume that $v'(t)\neq0$ since, otherwise, we would
have
\[
\frac{s(t)}{a}=\frac{s}{a}+\alpha t=\frac{s}{a}+\beta t
\quad\text{and}\quad
as(t)=as+\alpha t=as+\beta t.
\]
But the equalities above imply that $a^2=1$, which contradicts the
definition of base curve in \eqref{eq:base}. Thus, it follows from 
\eqref{eq:xi2(v(t))} and \eqref{eq:xi3(v(t))} that
\begin{eqnarray}\label{eq:xi_2exi_3}
\xi_2'=\frac{a^2 \alpha-\beta}{a^2+1}\cdot\frac{1}{v'} \quad\text{and}\quad
\xi_3'=\frac{\alpha-a^2 \beta}{a^2+1}\cdot\frac{1}{v'}.
\end{eqnarray}
Therefore, from \eqref{eq:xis} and \eqref{eq:xi_2exi_3} we obtain
\begin{eqnarray}\label{eq:relxi_1}
\cos^2(\xi_1(v))(a^2\alpha-\beta)=\sin^2(\xi_1(v))(\alpha-a^2\beta).
\end{eqnarray}
As $a>1$, one has $a^2\alpha-\beta\neq0$ or $\alpha-a^2\beta\neq0$, and
we conclude from \eqref{eq:relxi_1} that $\xi_1(v)$ is constant. In this 
case, there is a constant $b>1$ such that $\cos^2\xi_1=\dfrac{b^2}{1+b^2}$
and $\sin^2 \xi_1 = \dfrac{1}{1+b^2}$. Therefore, it follows from \eqref{eq:xis}
that 
\begin{eqnarray}\label{eq:xi2-xi3}
\xi_2(v)= \dfrac{1}{b^2} \xi_3(v)+d,
\end{eqnarray}
for some constant $d$. On the other hand, if $\cos\xi_1\neq0$, it follows
from \eqref{eq:xis} that
\begin{eqnarray}\label{eq:xi_2xi_3}
(\xi_2'(v))^2=\tan^4\xi_1\cdot(\xi_3'(v))^2.
\end{eqnarray}
By substituting \eqref{eq:xi_2xi_3} in \eqref{eq:relxi_inu} we obtain
\[
\tan^2\xi_1\cdot(\xi_3'(v))^2=\sin^2\nu,
\]
ant this implies that we can choose $\xi_3(v)=bv$, and from \eqref{eq:xi3(v(t))} 
we obtain 
\begin{eqnarray}\label{eq:v(t)}
v(t)=v+\frac{\alpha-a^2\beta}{b(a^2+1)}t.
\end{eqnarray}
Moreover, from \eqref{eq:xi2-xi3}, the equation 
$\xi_2(v(t)) = \dfrac{1}{b^2} \xi_3(v(t))+d$ implies that 
\begin{eqnarray}\label{eq:b-alpha-beta}
\dfrac{1}{b^2} = \dfrac{a^2 \alpha - \beta}{\alpha-a^2 \beta},
\end{eqnarray}
and from \eqref{eq:b-alpha-beta} we obtain the same 
relation \eqref{eq:alpha-beta} between $\alpha$ and $\beta$. This
relation, when substituted  in \eqref{eq:s(t)} and \eqref{eq:v(t)},
gives
\[
s(t)=s+\frac{a(b^2-1)}{a^2b^2-1}\beta t \quad\text{and}\quad
v(t)=v+\frac{b(1-a^2)}{a^2b^2-1}\beta t,
\]
that coincide with the expressions in \eqref{eq:z(t)w(t)}. Finally, from
the relation \eqref{eq:xi2-xi3} we obtain $\xi_2(v)=\frac{v}{b}-\frac{\pi}{2}$.
By taking $\xi=\frac{\pi}{2}$ and $\xi_1(v)=\arcsin\left(\frac{1}{\sqrt{1+b^2}}\right)$,
the new parametrization $F(u,v)$ thus obtained coincides with $Y(u,v)$
given in \eqref{eq:expY}, up to isometries of $\Sp^3$ and linear
reparametrization. The conclusion follows from Theorem \ref{teo:main1}.
\end{proof}

\section{Conformally flat hypersurfaces}

In this section, it will presented an application of the classification result 
for helicoidal flat surfaces in $\mathbb{S}^3$ in a geometric description 
for conformally flat hypersurfaces in four-dimensional space forms.

The problem of classifying conformally flat hypersurfaces in space forms
has been investigated for a long time, with special attention on $4$-dimensional
space forms. In fact, any surface in $\R^3$ is conformally flat, since it can be parametrized by isothermal coordinates. On the other hand, Cartan \cite{Cartan}
gave a complete classification of conformally flat hypersurfaces into a 
$(n+1)$-dimensional space form, with $n+1\geq5$. Such hypersurfaces
are quasi-umbilic, i.e., one of the principal curvatures has multiplicity at least
$n-1$. In the same paper, Cartan showed that the quasi-umbilic surfaces are conformally flat, but the converse does not hold. Since then, there has been
an effort to obtain a classification of hypersurfaces with three distinct principal curvatures.

Lafontaine \cite{Lafontaine} considered hypersurfaces of type 
$M^3 = M^2 \times I\subset\R^4$ and obtained the following classes of 
conformally flat hypersurfaces: (a) $M^3$ is a cylinder over a surface, where
$M^2 \subset \R^3$ has constant curvature; (b) $M^3$ is a cone over a 
surface in the sphere, where $M^2 \subset\Sp^3$ has with constant curvature; 
(c) $M^3$ is obtained by rotating a constant curvature surface of the hyperbolic space and $M^2 \subset \mathbb{H}^3 \subset \R^4$, where $\mathbb{H}^3$
is the half space model (see \cite{Suyama2} for more details).

Hertrich-Jeromin \cite{Jeromin1} established a correspondence between 
conformally flat hypersufaces in space forms, with three distinct principal
curvatures, and solutions $(l_1, l_2, l_3) : U \subset \R^3 \rightarrow \R$ 
for the Lam\'e's system \cite{reflame}
\begin{equation}
\begin{array}{rcl}
	l_{i,x_jx_k} - \dfrac{l_{i,x_j} l_{j,x_k}}{l_j} - \dfrac{l_{i,x_k} l_{k,x_j}}{l_k} &=& 0,  \\
	\left( \dfrac{l_{i,x_j}}{l_j} \right)_{,x_j} + \left( \dfrac{l_{j,x_i}}{l_i} \right)_{,x_i} + \dfrac{l_{i,x_k} l_{j,x_k}}{l_k^2} &=& 0,
\end{array} \label{lame}
 \end{equation} 
where $i, j, k$ are distinct indices that satisfies the condition
\begin{equation} \label{guich}
l_1^2 - l_2^2 + l_3^2 = 0 
\end{equation}
known as Guichard condition. In this case, the correspondent coformally
flat hypersurface in $M^4_K$ is parametrized by curvature lines, with 
induced metric given by
\[
g = e^{2u} \left\{ l_1^2 (dx_1)^2 + l_1^2 (dx_1)^2 + l_3^2 (dx_3)^2  \right\}.
\]
 
In \cite{santos} the second author and Tenenblat obtained solutions of Lam\'e's system (\ref{lame}) that are invariant under symmetry groups. Among the solutions, there are those that are invariant under the action of the 2-dimensional subgroup of translations and dilations and depends only on two variables:
\begin{enumerate}
\item[(a)] $l_1 = \lambda_1, \; l_2 = \lambda_1 \cosh (b\xi + \xi_0), \; l_3 = \lambda_1 \sinh (b\xi + \xi_0)$, where $\xi = \alpha_2 x_2 + \alpha_3 x_3$, $\alpha_2^2 + \alpha_3^2 \neq 0$ and $b,\, \xi_0\in \R$ ; 
\item[(b)] $ l_2 = \lambda_2, \; l_1 = \lambda_2 \cos \varphi(\xi) , \; l_3 = \lambda_2 \sin \varphi(\xi) $, where $\xi = \alpha_1 x_1 + \alpha_3 x_3$, $\alpha_1^2 + \alpha_3^2 \neq 0$ and $\varphi$ is one  of the following functions:
   \begin{enumerate}
      \item[(b.1)] $\varphi(\xi) = b \xi + \xi_0$, if $\alpha_1^2 \neq \alpha_3^2$, where $\xi_0, \,b\in \R$;
      \item[(b.2)] $\varphi$ is any function of $\xi$, if $\alpha_1^2 = \alpha_3^2$;
   \end{enumerate}
\item[(c)]  $l_3 = \lambda_3, \; l_2 = \lambda_3 \cosh (b\xi + \xi_0), \; l_1 = \lambda_3 \sinh (b\xi + \xi_0)$, where $\xi = \alpha_1 x_1 + \alpha_2 x_2$, $\alpha_1^2 + \alpha_2^2 \neq 0$ and   $b,\,\xi_0\in\R $. 
\end{enumerate}
 It is known (see \cite{Suyama2}) that the solutions that do not depend on 
one of the variables are associated to the products given by Lafontaine.
For the solutions given in (b), further geometric solutions can be obtained
with the classification result for helicoidal falt surfaces in $\mathbb{S}^3$.
These solutions are associated to  conformally flat hypersurfaces that are
conformal to the products $M^2 \times I \subset \R^4$ given by 
\[
M^2 \times I = \left\{ tp:0<t<\infty, p\in M^2 \subset \mathbb{S}^3 \right\},
\]
where $M^2$ is a flat surface in $S^3$, parametrized by lines of curvature,
whose first and second fundamental forms are given by 
\begin{equation}
\begin{array}{rcl} \label{firstffsphere}
I &=& \sin^2 (\xi + \xi_0) dx_1^2 + \cos^2 (\xi + \xi_0)  dx_3^2,  \\
II &=& \sin (\xi + \xi_0) \cos (\xi + \xi_0) (dx_1^2 - dx_3^2),
\end{array}
\end{equation}
which are, up to a linear change of variables, the fundamental forms that
are considered in this paper. Therefore, as an application of the 
characterization of helicoidal flat surfaces in terms of first and second
fundamental forms, one has the following theorem:

\begin{thm} \label{characterization.helicoidal}
Let $ l_2 = \lambda_2, \; l_1 = \lambda_2 \cos \xi + \xi_0 , \; 
l_3 = \lambda_2 \sin \xi + \xi_0 $ be solutions of the Lam\'e's sytem, where
$\xi = \alpha_1 x_1 + \alpha_3 x_3$ and $\alpha_1, \, \alpha_3, \, \lambda_2, \, \xi_0$ are real constants with $\alpha_1\cdot\alpha_3 \neq 0$. Then the associated conformally flat hypersurfaces are conformal to the product, $M^2 \times I$,
where $M^2 \subset \mathbb{S}^3$ is locally congruent to helicoidal flat surface.
\end{thm}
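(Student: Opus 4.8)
The plan is to reduce the statement to the classification already obtained in Theorems \ref{teo:main1} and the converse theorem of Section 4, by matching the fundamental forms \eqref{firstffsphere} with \eqref{eq:forms} after an explicit linear change of coordinates. First I would start from the solution family (b.1), where $\varphi(\xi)=\xi+\xi_0$ is linear with $\xi=\alpha_1x_1+\alpha_3x_3$ and $\alpha_1^2\neq\alpha_3^2$ (which is exactly the hypothesis $\alpha_1\cdot\alpha_3\neq 0$ together with the genericity needed); using the correspondence of Hertrich-Jeromin \cite{Jeromin1} and the computations in \cite{santos}, the associated conformally flat hypersurface in $M^4_K$ is conformal to a product $M^2\times I=\{tp:0<t<\infty,\ p\in M^2\subset\Sp^3\}$, where $M^2$ carries the first and second fundamental forms displayed in \eqref{firstffsphere}. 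This first step is essentially bookkeeping: I would cite \cite{Jeromin1,Suyama2,santos} for the passage from the Lam\'e solution to the product structure and for the fact that the ``spherical factor'' $M^2$ has precisely the fundamental forms \eqref{firstffsphere}.

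The core step is to show that a surface $M^2\subset\Sp^3$ with fundamental forms \eqref{firstffsphere} is, up to a linear change of variables and a rigid motion, one of the helicoidal flat surfaces classified in Theorem \ref{teo:main1}. I would first observe that \eqref{firstffsphere} is the first and second fundamental form of a flat surface parametrized by curvature lines (the shape operator read off from $I$ and $II$ has eigenvalues $\cot(\xi+\xi_0)$ and $-\tan(\xi+\xi_0)$, whose product is $-1$, so the extrinsic curvature is $-1$ and hence, by Gauss, the intrinsic curvature vanishes). Then I would perform the standard change from curvature-line coordinates to asymptotic (Tschebyscheff) coordinates for a flat surface in $\Sp^3$: writing $\phi=\xi+\xi_0$ and setting, up to constants, $u=\tfrac{1}{\sqrt2}(\alpha_1x_1+\alpha_3x_3)+\cdots$, $v=\tfrac{1}{\sqrt2}(\alpha_1x_1-\alpha_3x_3)+\cdots$ (the precise linear combination is dictated by diagonalizing $I-$ and $II$-simultaneously along the null directions of $II$), the forms \eqref{firstffsphere} transform into $I=du^2+2\cos\omega\,du\,dv+dv^2$, $II=2\sin\omega\,du\,dv$ with $\omega$ an affine function of $u$ and $v$, because $\phi$ was affine in $x_1,x_3$ and the coordinate change is linear. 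Concretely one gets $\omega=2\phi-\tfrac\pi2$ or a similar normalization, which is linear precisely because $\varphi$ in (b.1) is linear and $\alpha_1^2\neq\alpha_3^2$ guarantees the change of variables is nondegenerate.

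Once $M^2$ is presented in asymptotic coordinates with linear angle function, Theorem \ref{teo:main1} (together with the converse established in Section 4) applies verbatim: such a surface is, up to rigid motions of $\Sp^3$, invariant under a helicoidal one-parameter group $\phi_{\alpha,\beta}(t)$, with $(\alpha,\beta)$ determined through \eqref{eq:alpha-beta} by the coefficients of the linear angle function, i.e.\ by $\alpha_1,\alpha_3,\lambda_2$. Hence $M^2$ is locally congruent to a helicoidal flat surface, which is the claim. I would close by remarking that the degenerate case $\alpha_1^2=\alpha_3^2$ is excluded by the hypothesis $\alpha_1\cdot\alpha_3\neq 0$ only when combined with the requirement that $\xi$ genuinely depends on both $x_1$ and $x_3$ in a non-null way; if $\alpha_1=\pm\alpha_3$ one would instead land on the Hopf-cylinder/peach-front type degeneration analogous to the Remark after Theorem \ref{teo:main1}, so this boundary case should be noted even if it is not the generic conclusion.

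\textbf{Main obstacle.} The delicate point is the explicit linear change of coordinates from curvature lines to asymptotic lines and the verification that it carries the linear function $\varphi(\xi)$ to a linear angle function $\omega(u,v)$ while keeping the forms in the normalized shape \eqref{eq:forms}; getting the constants and the admissible range $0<\omega<\pi$ right (and identifying which $(a,b)$ in \eqref{eq:curves-condition} arise) is where the real computational content lies, and it also has to be cross-checked against the sign conventions in \eqref{firstffsphere} coming from \cite{santos}.
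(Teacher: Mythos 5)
Your proposal is correct and follows essentially the same route as the paper, which states this theorem without a separate proof and relies on exactly the reduction you describe: the Lam\'e solution of type (b.1) yields the fundamental forms \eqref{firstffsphere}, which a linear change from curvature-line to asymptotic coordinates converts into \eqref{eq:forms} with linear angle function, whereupon Theorems \ref{teo:main1} and the converse in Section 4 identify $M^2$ as a helicoidal flat surface. Your explicit computation of that coordinate change (and your caveat about the boundary case $\alpha_1=\pm\alpha_3$) supplies detail the paper leaves implicit in the phrase ``up to a linear change of variables.''
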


\bibliographystyle{acm}
\bibliography{refs_helicoidal-sphere}

\end{document}